\theoremstyle{definition}
\newtheorem{definition}{Definition}[section]
\theoremstyle{plain}
\newtheorem{pro}[definition]{Proposition}
\begin{document}
\def\iff{if and only if }
\def\pi{positive implicative }

\setcounter{page}{1} 
\vspace{16mm}

\begin{center}
{\normalsize \textbf{Optimal Pair of Two Linear Varieties}} \\[%
12mm]
\textsc{Armando Gon\c calves$^{1}$, M. A. Facas Vicente$^{\ast }$$^{1}$ $^{2}$ and Jos\'{e} Vit\'{o}ria$^{1}$}\\[8mm]

\begin{minipage}{123mm}
{\small {\sc Abstract.}
   The optimal pair of two linear varieties is considered as a best approximation problem, namely the distance between a point and the difference set of two linear varieties. The Gram determinant allows to get the optimal pair in closed form.}
\end{minipage}
\end{center}

\renewcommand{\thefootnote}{} \footnotetext{$^{\ast }$\thinspace
Corresponding author.} \footnotetext{%
\textit{2010 Mathematics Subject Classification.} 41A50, 41A52, 51M16, 51N20.}
\footnotetext{\textit{Key words and phrases.} linear varieties, best
approximation pair, approximation theory.} \footnotetext{$^{1}$\thinspace
Department of Mathematics, University of Coimbra, Apartado 3008, EC\ Santa
Cruz, 3001-501 Coimbra, Portugal. E-mails: \textit{adsg@mat.uc.pt} (Armando Gon\c calves), \textit{vicente@mat.uc.pt} (M. A. Facas Vicente) and \textit{jvitoria@mat.uc.pt} (Jos\'{e} Vit\'{o}ria).}
\footnotetext{$^{2}$\thinspace Supported by Instituto de Engenharia de
Sistemas e Computadores---Coimbra, Rua Antero de Quental, 199, 3000-033
Coimbra, Portugal.}

\bigskip

\section{Introduction}


\bigskip

In this paper we deal with the problem of finding the the optimal points of two linear varieties in a finite dimensional real linear space.

The distance between two linear varieties has been dealt with in several papers: \cite{Caseiro}, \cite{Dax} and \cite{LAA1}. In \cite{LAA1}, only the distance is considered and the points that realize the distance are not exhibited. In \cite{Caseiro} the best approximation points are found.

In \cite{Caseiro} projecting equations were called into play; in \cite{Dax} the difference set of two closed convex sets in $\mathrm{I\kern-.17emR}^{m}$ was considered.

In this paper, we formulate the problem as suggested by \cite[page 196]{Dax} and we use Gram theory \cite[page 74]{Luenberger}, \cite[page 65]{Deutsch} to solve it.

We use results on existence and uniqueness of optimal points by considering a least norm problem of the difference set of two closed convex sets in $\mathrm{I\kern-.17emR}^{m}$ \cite{Dax}. For the concepts, results and  motivation on the study of the distance between convex sets see \cite{Dax}.

We endow the space $\mathrm{I\kern-.17emR}^{m}$ with the usual inner
product $\bullet$:%
\begin{equation*}
\overrightarrow{p}\bullet \overrightarrow{q}:=p_{1}q_{1}+p_{2}q_{2}+\cdots
p_{m}q_{m}\, ,
\end{equation*}%
where $\overrightarrow{p}=\left[
\begin{array}{cccc}
p_{1} & p_{2} & \cdots & p_{m}%
\end{array}%
\right] ^{T}$\ and $\overrightarrow{q}=\left[
\begin{array}{cccc}
q_{1} & q_{2} & \cdots & q_{m}%
\end{array}%
\right] ^{T}$ and with the Euclidean norm%
\begin{equation*}
\left\Vert \overrightarrow{p}\right\Vert =\sqrt{\overrightarrow{p}\bullet
\overrightarrow{p}}.
\end{equation*}

\bigskip

\bigskip

\section{The Result}


\bigskip

We are looking for the optimal pair of two linear varieties $V_{\vec{b}}$ and $V_{\vec{c}}$ in $\mathrm{I\kern-.17emR}^{m}$ defined by $$V_{\vec{b}}:=\{\vec{b}+B \vec{u}:\,\vec{u} \in  \mathrm{I\kern-.17emR}^{l_1}\} \,\,\mbox{and}\,\,V_{\vec{c}}:=\{\vec{c}-C \vec{v}:\,\vec{v} \in  \mathrm{I\kern-.17emR}^{l_2}\},$$
with $\vec{b}$ and $\vec{c}$ given vectors in $\mathrm{I\kern-.17emR}^{m}$.

Following \cite[page 196]{Dax} the distance $d(V_{\vec{b}},V_{\vec{c}})$ between the two linear varieties $V_{\vec{b}}$ and $V_{\vec{c}}$ is obtained through the minimization of $\|A\vec{x}-\vec{d}\|$ where: $$\vec{d}=\vec{c}-\vec{b};\,\, A=[B\,\,C]\, \mbox{is a real}\,\, m\times(l_1 + l_2 )\,\, \mbox{matrix; and}\,\, \vec{x}= [\vec{u}^T \, \vec{v}^T]^T \,\in \mathrm{I\kern-.17emR}^{l_1 + l_2 }.$$

So the distance $d(V_{\vec{b}},V_{\vec{c}})$ between the varieties $V_{\vec{b}}$ and $V_{\vec{c}}$, may be studied using the shortest distance between a point $\vec{d}$ and the subspace $\mbox{Range} (A)$, the column space of matrix $A$.

Besides getting the distance $d(V_{\vec{b}},V_{\vec{c}})$, we also find the points $\vec{b}^* \in V_{\vec{b}}$ and $\vec{c}^* \in V_{\vec{c}}$ such that $d(V_{\vec{b}},V_{\vec{c}}) = \| \vec{b}^* - \vec{c}^* \|$, that is to say $\vec{b}^*$ and $\vec{c}^*$ are the best approximation points.

In this new setting, and using the Euclidean norm, Gram theory \cite{Luenberger} can play an important role.

Some definitions are needed.

\begin{definition}
The optimal pair of two linear varieties $\mathcal{A}$ and $\mathcal{B}$ is the pair $(\vec{a}^* , \vec{b}^* ) \in \mathcal{A} \times \mathcal{B}$ satisfying $d(\mathcal{A} , \mathcal{B} ) = \| \vec{a}^* - \vec{b}^* \|$, where $d(\mathcal{A} , \mathcal{B} )$, the distance between $\mathcal{A}$ and $\mathcal{B}$, is defined as
$$d(\mathcal{A} , \mathcal{B} ) = \inf\{\|\vec{a} - \vec{b}\| : \vec{a} \in \mathcal{A}, \vec{b} \in \mathcal{B}\}.$$
\end{definition}

\begin{definition}
Let $y_1 , y_2 , \ldots, y_n$ be elements of $\mathrm{I\kern-.17emR}^{m}$. The $n\times n$ matrix
$$G(y_1 , y_2 , \ldots, y_n) = \left[
\begin{array}{cccc}
y_{1}\bullet y_1 & y_1 \bullet y_{2} & \cdots & y_1 \bullet y_{n}\\
y_{2} \bullet y_1 & y_2 \bullet y_{2} & \cdots & y_2 \bullet y_{n}\\
\vdots & \vdots & \vdots & \vdots\\
y_{n} \bullet y_1 & y_n \bullet y_{2} & \cdots & y_n \bullet y_{n}
\end{array}%
\right]$$
is called the Gram matrix of $y_1 , y_2 , \ldots, y_n$. The determinant $g(y_1 , y_2 , \ldots, y_n )$ of the Gram matrix is known as the Gram determinant.
\end{definition}

It is known, see for example \cite{Laurent}, \cite{Luenberger}, that $g(y_1 , y_2 , \ldots, y_n ) \geq 0$ and $g(y_1 , y_2 , \ldots, y_n )=0$ if and only if $y_1 , y_2 , \ldots, y_n $ are linearly dependent.

\begin{pro}
Let be given the linear varieties
$$V_{\vec{b}}:=\{\vec{b}+B \vec{u}:\,\vec{u} \in  \mathrm{I\kern-.17emR}^{l_1}\}\, , \,V_{\vec{c}}:=\{\vec{c}-C \vec{v}:\,\vec{v} \in  \mathrm{I\kern-.17emR}^{l_2}\},$$
where $\vec{b}$ and $\vec{c}$ are any vectors in $\mathrm{I\kern-.17emR}^{m}$ and $B\in \mathrm{I\kern-.17emR}^{m\times l_1}\, ,\, C\in \mathrm{I\kern-.17emR}^{m\times l_2}$ are fixed matrices.

Let consider $\vec{d}= \vec{c}-\vec{b}$ and assume that $A=[B\,\,\,C]=[\vec{a_1}\,\vec{a_2}\, \ldots\,\vec{a_n}]\in \mathrm{I\kern-.17emR}^{m\times n}$, $n=l_1 + l_2$, is a full column rank matrix. Then:
\begin{enumerate}
\item[(A)] The optimal pair $(\vec{b}^* , \vec{c}^* )$ is obtained as
$$\vec{b}^* = \vec{b}+B\vec{u}^* \, , \, \vec{c}^* = \vec{c} - C\vec{v}^* ,$$
with

\begin{eqnarray}
\left[ \begin{array}{c}
\vec{u}^*\\
\vec{v}^*
\end{array}%
\right] & = & - \frac{1}{g(\vec{a_1},\vec{a_2},\ldots,\vec{a_n})}\left| \begin{array}{cc}
\begin{array}{c}
G(\vec{a_1},\vec{a_2},\ldots,\vec{a_n})
\end{array} & \!\left|\begin{array}{c}
                                                        \vec{d} \bullet \vec{a_1}\\
                                                        \vdots\\
                                                        \vec{d} \bullet \vec{a_n}
                                           \end{array}\right.\\
                                           \hline & {}\\
                                           \begin{array}{cccc}
                                           \vec{a_1} & \vec{a_2} & \ldots & \vec{a_n}
                                           \end{array} & \!\left|\begin{array}{c}
                                                        \,\,\,\,\,\,\vec{0}\,\,\,\,\,\,
                                           \end{array}\right.\\
\end{array}
\right|,
\end{eqnarray} \label{eq1}
where the (formal) determinant is to be expanded by the last row to yield a linear combination of the columns $\vec{a_1}, \vec{a_2}, \ldots, \vec{a_n}$ of the matrix $A$.
\item[(B)] The distance between $V_{\vec{b}}$ and $V_{\vec{c}}$ is given by
$$d(V_{\vec{b}}, V_{\vec{c}}) = \|\vec{b}^* - \vec{c}^{\,*}\|$$
and also

\begin{eqnarray}
d^2 (V_{\vec{b}}, V_{\vec{c}}) & = & \frac{g(\vec{d},\vec{a_1},\vec{a_2},\ldots,\vec{a_n})}{g(\vec{a_1},\vec{a_2},
\ldots,\vec{a_n})}.
\end{eqnarray} \label{eq2}

\end{enumerate}
\end{pro}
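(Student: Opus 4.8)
The plan is to reduce both assertions to a classical least--squares problem and then read everything off from the projection theorem together with the two Gram identities already recalled.

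First I would translate the geometry. For $\vec b+B\vec u\in V_{\vec b}$ and $\vec c-C\vec v\in V_{\vec c}$ one has
$$(\vec b+B\vec u)-(\vec c-C\vec v)=B\vec u+C\vec v-(\vec c-\vec b)=A\vec x-\vec d,\qquad \vec x=[\vec u^T\,\vec v^T]^T,$$
so that $d(V_{\vec b},V_{\vec c})=\inf_{\vec x\in\mathrm{I\kern-.17emR}^{n}}\|A\vec x-\vec d\|$ is exactly the distance from $\vec d$ to the subspace $\mathrm{Range}(A)=\mathrm{span}\{\vec{a_1},\ldots,\vec{a_n}\}$. Since $A$ has full column rank, this subspace is $n$--dimensional and closed, so the best approximation of $\vec d$ in it is attained at a unique point $A\vec x^{*}$ (this is also the existence and uniqueness invoked via \cite{Dax}); putting $\vec b^{*}=\vec b+B\vec u^{*}$, $\vec c^{*}=\vec c-C\vec v^{*}$ with $\vec x^{*}=[\vec u^{*T}\,\vec v^{*T}]^T$ then produces the optimal pair and the first line of (B).

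Next I would pin down $\vec x^{*}$ through the normal equations: by the projection theorem $\vec x^{*}$ is optimal \iff the residual $A\vec x^{*}-\vec d$ is orthogonal to every column of $A$, i.e. $A^{T}A\,\vec x^{*}=A^{T}\vec d$, which written out is the linear system
$$G(\vec{a_1},\ldots,\vec{a_n})\,\vec x^{*}=\bigl[\vec d\bullet\vec{a_1},\ \ldots,\ \vec d\bullet\vec{a_n}\bigr]^{T},$$
whose coefficient matrix is nonsingular because full column rank is equivalent to the linear independence of $\vec{a_1},\ldots,\vec{a_n}$, hence to $g(\vec{a_1},\ldots,\vec{a_n})\neq 0$. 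Cramer's rule then gives $(\vec x^{*})_{j}=D_{j}/g(\vec{a_1},\ldots,\vec{a_n})$, where $D_{j}$ is the determinant of $G(\vec{a_1},\ldots,\vec{a_n})$ with its $j$-th column replaced by $[\vec d\bullet\vec{a_1},\ldots,\vec d\bullet\vec{a_n}]^{T}$. Expanding the formal determinant in (1) along its last row, the term carrying the $\vec 0$ entry drops out, and for each $j\le n$ the cofactor sign $(-1)^{(n+1)+j}$ combined with the $(-1)^{n-j}$ needed to shift the last column of the minor into position $j$ gives an overall factor $-1$; hence that determinant equals $-\sum_{j}D_{j}\,\vec{a_j}$, and multiplying by $-1/g(\vec{a_1},\ldots,\vec{a_n})$ turns the coefficient of $\vec{a_j}$ into $(\vec x^{*})_{j}$. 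Reading the left--hand side of (1) as the vector of these coefficients, split into its first $l_{1}$ and last $l_{2}$ entries, this is precisely (A).

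Finally, for the closed form of the distance I would invoke the Gram identity cited from \cite{Luenberger}, \cite{Deutsch}: writing $\vec d=\vec e+\vec p$ with $\vec p\in\mathrm{span}\{\vec{a_i}\}$ and $\vec e$ orthogonal to every $\vec{a_i}$, elementary row operations reduce $g(\vec d,\vec{a_1},\ldots,\vec{a_n})$ to a determinant with first row $[\,\|\vec e\|^{2},0,\ldots,0\,]$ and trailing minor $G(\vec{a_1},\ldots,\vec{a_n})$, so that $g(\vec d,\vec{a_1},\ldots,\vec{a_n})=\|\vec e\|^{2}\,g(\vec{a_1},\ldots,\vec{a_n})$; since $\|\vec e\|=\mathrm{dist}(\vec d,\mathrm{Range}(A))=d(V_{\vec b},V_{\vec c})$, formula (2) follows. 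I expect the only genuinely delicate step to be the cofactor-and-sign accounting that matches the $-1/g$ prefactor of (1) to Cramer's rule; the remaining ingredients---the projection theorem, the normal equations, the equivalence of full column rank with $g\neq 0$, and the cited Gram formula---are routine or already available.
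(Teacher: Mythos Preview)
Your proof is correct and follows exactly the strategy of the paper: reduce the problem to the least--squares minimization $\min_{\vec x}\|A\vec x-\vec d\|$, i.e.\ to the distance from $\vec d$ to $\mathrm{Range}(A)$, and then invoke the Gram--determinant formulas for the projection and for the squared distance. The only difference is expository---the paper simply cites \cite[p.~74]{Luenberger} for (1) and \cite[p.~65]{Deutsch} for (2), whereas you spell out the underlying derivations (normal equations plus Cramer's rule and the cofactor bookkeeping for (1), orthogonal decomposition of $\vec d$ for (2)); your sign accounting for the $-1/g$ prefactor is correct.
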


\begin{proof} From \cite[page 196]{Dax} we must minimize $\left\Vert A\overrightarrow{x}-\overrightarrow{d} \right\Vert$, where $\overrightarrow{x} = \left[ \begin{array}{c}
\vec{u}^*\\
\vec{v}^*
\end{array} \right]$ and this is equivalent to find the distance from the point $\overrightarrow{d}$ to the column-space Range(A) of A.

By hypothesis, the columns of the matrix A are linearly independent. Then from \cite[page 74]{Luenberger} we obtain (1) and from \cite[page 65]{Deutsch} we obtain (2).
\end{proof}

\bigskip

\bigskip

\bigskip

\bigskip

\end{document}